\theoremstyle{plain}
\newtheorem{theorem}{Theorem}
\newtheorem{hypothesis}{Hypothesis}
\theoremstyle{remark}
\theoremstyle{definition}
\newcommand{\p}{p}
\begin{document}

\title[On the connection problem for the $\protect \p$-Laplacian system]{On the connection 
problem for the $p$-Laplacian system for potentials with several global minima}
\author{Nikolaos Karantzas}
\address{Department of Mathematics\\ University of Athens\\ Panepistemiopolis\\ 15784 Athens\\ Greece}
\email{\href{mailto:nk1986@yahoo.co.uk}{\texttt{nk1986@yahoo.co.uk}}}
\thanks{The author was partially supported through the project PDEGE -- Partial Differential Equations Motivated by Geometric Evolution, co-financed by the European Union -- European Social Fund (ESF) and national resources, in the framework of the program Aristeia of the `Operational Program Education and Lifelong Learning' of the National Strategic Reference Framework (NSRF)}
\date{}

\begin{abstract}
We study the existence of solutions to systems of ordinary differential equations that involve the $p$-Laplacian for potentials with several global minima. We consider the connection problem for potentials with two minima in arbitrary dimensions and with three or more minima on the plane.
\end{abstract}

\maketitle

\section{Introduction}
We consider the problem of existence of solutions to systems of ordinary differential equations 
that involve the $p$-Laplacian operator, that is, systems of the form 
\[ ( |u_{x}|^{p-2} u_{x})_{x} - \frac{1}{q} \nabla W(u) = 0 \] 
for vector-valued functions $u$ and potentials $W$ that possess several global minima. 
The corresponding problem was considered in the papers Alikakos and Fusco \cite{AF} and Alikakos, Betel\'u, 
and Chen \cite{ABC} for the standard Laplacian and here we provide extensions of these results for $p>1$.

What is presented in the following is divided in two parts. 
In Section 2 we consider the problem in $\mathbb{R}^N$ for potentials 
with two global minima and state without proof an existence theorem 
together with a variational characterization of the connecting solutions 
(for the full proofs we refer to \cite{nikos-thesis}). 
The problem for potentials possessing three or more global minima (even for the case $p=2$)
is significantly harder and essentially open. In Section 3, by restricting ourselves
to $N=2$, we are able to exhibit a class of potentials for which we have reasonably 
complete results. In particular, we establish a uniqueness theorem and also give some 
examples of potentials that exhibit non-existence and non-uniqueness properties.

\section{The connection problem for potentials possessing two global minima}
Let $\Omega$ be an open and connected subset of 
$\mathbb{R}^{N}$ and $W: \Omega \to \mathbb{R}$ be a $C^{2}$ nonnegative 
potential function with two minima, that is, $W>0$ in 
$\mathbb{R}^{N} \setminus A$, with $W=0$ on 
$A=\{a^{+},a^{-}\}$. In \cite{AF}, Alikakos and Fusco analyze the 
existence of solutions to the Hamiltonian system
\begin{equation}
\label{one}
u_{xx}-\frac{1}{2} \nabla W(u) = 0, \text{ with } \lim_{x \to \pm \infty} u(x) = a^{\pm},
\end{equation}
where $u: \mathbb{R} \to \mathbb{R}^{N}$ is a vector-valued function. 
Such solutions are called {\em heteroclinic connections}. The system \eqref{one} 
represents the motion of $N$ material points of equal mass under the potential 
$-W(u)$, with $x$ standing for time and $u$ for position. The approach in \cite{AF} 
is variational and is based on Hamilton's principle of least action, that is, 
on the minimization of the action functional 
$A: W^{1,2}(\mathbb{R}, \mathbb{R}^{N}) \to \mathbb{R}$, defined as
\[ A(u)=\frac{1}{2} \int_{\mathbb{R}} (|u_{x}|^{2} + W(u))\, dx. \]
The method depends on the introduction of a constraint leading to the existence 
of local minimizers. The constraint can later be removed and therefore 
provide a solution to \eqref{one}.

In this section, we state without proof an extension of the results in \cite{AF} 
to the $p$-Laplacian operator. To this end, we consider the system
\begin{equation}
\label{two}
( |u_{x}|^{p-2} u_{x})_{x} - \frac{1}{q} \nabla W(u) = 0, \text{ with } \lim_{x \to \pm \infty} u(x)=a^{\pm}
\end{equation}
where $u: \mathbb{R} \to \mathbb{R}^{N}$ is again a vector-valued function and 
$p, q > 1$ are H\"{o}lder conjugates, that is, $1/p + 1/q = 1$. Alternatively, 
by Hamilton's principle of least action, the motion from one minimum of the potential 
to another is a critical point of the action functional 
$A_{p} : W^{1,p} ( [ t_{1} , t_{2} ], \mathbb{R}^{N} ) \to \mathbb{R}$, defined as
\begin{equation*}
A_{p} (u, (t_{1},t_{2})):= \int_{t_{1}}^{t_{2}} \left( \frac{|u_{x}|^{p}}{p}+\frac{W(u)}{q} \right) dx.
\end{equation*}
Therefore, the system \eqref{two} is the associated Euler--Lagrange equation with $t_{1}=-\infty$ and $t_{2}=+\infty$. 
To state our results, we assume that the following hypothesis holds.
 
\begin{hypothesis}
\label{hone}
The potential $W$ is such that $ \liminf_{|u| \to \infty} W(u) >0 $ and also 
there exists $R > 0$ such that the map $r \mapsto W(a^{\pm} + r\xi)$ has a strictly 
positive derivative for every $r \in (0 , R)$ and for every 
$\xi \in S^{N-1} := \{u \in \mathbb{R}^{N}: |u|=1 \}$, with $R < |a^{+} - a^{-}|$.
\end{hypothesis}

\noindent Then, under the above hypothesis, we have the following theorems.

\begin{theorem}
\label{thmone}
Let $W: \mathbb{R}^{N} \to \mathbb{R}$ be a non-negative $C^{2}$ potential function 
and let $a^{-} \neq a^{+} \in \mathbb{R}^{N}$ be such that $W(a^{\pm}) = 0$. 
Also assume that Hypothesis \ref{hone} holds. Then, there exists a 
connection $U$ between $a^{-}$ and $a^{+}$.
\end{theorem}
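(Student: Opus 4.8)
The plan is to realise a connection as a minimiser of the action $A_p$ over a suitable admissible class and then remove the auxiliary constraint, in the spirit of Alikakos and Fusco \cite{AF} for $p=2$, with their quadratic kinetic term replaced by its $p$-homogeneous analogue via Young's inequality. Let $\mathcal{A}$ denote the class of maps $u\in W^{1,p}_{\mathrm{loc}}(\mathbb{R},\mathbb{R}^N)$ with $A_p(u,\mathbb{R})<\infty$ and $\lim_{x\to\pm\infty}u(x)=a^{\pm}$, and set $m:=\inf_{\mathcal{A}}A_p(\cdot,\mathbb{R})$. I would first record two elementary facts: $m<\infty$, by testing with the piecewise linear map equal to $a^{\pm}$ for $\pm x\ge 1$; and $m>0$, because the pointwise Young inequality $\tfrac{|u_x|^{p}}{p}+\tfrac{W(u)}{q}\ge W(u)^{1/q}\,|u_x|$ gives $A_p(u,\mathbb{R})\ge d_W(a^-,a^+)$, where $d_W(x,y):=\inf_\gamma\int_\gamma W^{1/q}\,d\ell$ is the geodesic distance for the degenerate conformal length element $W^{1/q}\,|d\ell|$, and $d_W(a^-,a^+)>0$ since $a^-\ne a^+$ and $W>0$ off $\{a^\pm\}$. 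Equality in Young's inequality forces the equipartition identity $|u_x|^{p}=W(u)$, which reappears at the end; reparametrising an optimal $W^{1/q}$-geodesic so that this holds in fact shows $m=d_W(a^-,a^+)$.

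The core of the argument, and where Hypothesis \ref{hone} is indispensable, is the compactness of minimising sequences. Fix $\rho\in\bigl(0,\min\{R,\tfrac12|a^+-a^-|\}\bigr)$. The radial monotonicity of $W$ on $B_R(a^{\pm})$ supplies a cut-off lemma: for a near-minimiser the set $\{x:|u_n(x)-a^-|<\rho\}$ may be assumed to be a half-line, since an excursion out of $B_\rho(a^-)$ and back to $\partial B_\rho(a^-)$ can be replaced, without raising $A_p$, by a radially descending curve — the potential integrand drops by monotonicity of $r\mapsto W(a^-+r\xi)$, and the kinetic cost is controlled — and likewise near $a^+$. Using the translation invariance of $A_p$ I normalise so that this half-line is $(-\infty,0)$. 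Then $u_n(0)\in\partial B_\rho(a^-)$ lies in a fixed compact set, $|u_n(x)-a^-|\ge\rho$ for $x\ge 0$, the arc of $u_n$ joining $\partial B_\rho(a^-)$ to $\partial B_\rho(a^+)$ has length at least $|a^+-a^-|-2\rho>0$ and action at most $m+1$, while on the tail $x<0$ the bound $W(u_n(x))\ge w(|u_n(x)-a^-|)$ with $w>0$ on $(0,R)$ together with finiteness of the action controls $|u_n-a^-|$ uniformly in $n$ near $-\infty$, and symmetrically near $+\infty$; moreover $\liminf_{|u|\to\infty}W(u)>0$ and finiteness of the action give a uniform bound $\|u_n\|_{\infty}\le M$. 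By the Morrey embedding $W^{1,p}\hookrightarrow C^{0,1-1/p}$ on bounded intervals, Arzel\`{a}--Ascoli, weak $W^{1,p}_{\mathrm{loc}}$ compactness and a diagonal extraction, a subsequence of $(u_n)$ converges to some $U$ locally uniformly and weakly in $W^{1,p}_{\mathrm{loc}}$.

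I would then verify $U\in\mathcal{A}$ with $A_p(U,\mathbb{R})=m$. Weak lower semicontinuity of $u\mapsto\int|u_x|^{p}$ and Fatou's lemma for the nonnegative term $\int W(u)$ give $A_p(U,\mathbb{R})\le\liminf_n A_p(u_n,\mathbb{R})=m$. Finiteness of $\int W(U)$ with $W>0$ off $\{a^\pm\}$ forces $\mathrm{dist}(U(x),\{a^\pm\})\to0$ as $|x|\to\infty$, and the radial monotonicity of Hypothesis \ref{hone} upgrades this to genuine limits $U(\pm\infty)\in\{a^+,a^-\}$ with no oscillation near a minimum. The normalisation passes to the limit: $|U(0)-a^-|=\rho$, so $U$ is non-constant; $|U(x)-a^-|\ge\rho$ for $x>0$, so $U(+\infty)\ne a^-$; and $|U(x)-a^-|\le\rho$ for $x\le0$ forces $U(-\infty)=a^-$, hence $U(+\infty)=a^+$. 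Thus $U$ is an admissible connection realising $m$.

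Finally, every competitor obtained from $U$ by a compactly supported perturbation lies in $\mathcal{A}$, so $U$ is a weak solution of the Euler--Lagrange equation $(|U_x|^{p-2}U_x)_x=\tfrac1q\nabla W(U)$. Regularity follows in the standard way: $|U_x|^{p-2}U_x$ is locally Lipschitz and $\xi\mapsto|\xi|^{p-2}\xi$ is a homeomorphism of $\mathbb{R}^N$, so $U\in C^1$; where $U_x\ne0$ the equation is a nondegenerate second-order ODE and bootstrapping gives $U\in C^2$; multiplying by $U_x$ and integrating yields the first integral $\tfrac{p-1}{p}|U_x|^{p}=\tfrac1q W(U)$, with vanishing constant because $U_x\to0$ and $W(U)\to0$ at $\pm\infty$ — that is, the equipartition $|U_x|^{p}=W(U)$ — which also confirms $U\in C^1$ across $\{U_x=0\}$ and $m=d_W(a^-,a^+)$. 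With $U(\pm\infty)=a^{\pm}$ this produces the desired connection. I expect the genuine difficulty to sit in the compactness step: the first clause of Hypothesis \ref{hone} keeps a minimising sequence from escaping to infinity in $\mathbb{R}^N$, while the radial monotonicity yields the cut-off lemma that legitimises the translational normalisation and forbids oscillation near the minima, and carrying that lemma over from the quadratic to the $p$-homogeneous functional is the technical heart of the argument.
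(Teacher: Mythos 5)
Your strategy is, as far as one can tell, the one the paper itself intends: Theorem \ref{thmone} is stated without proof, with the author saying only that it follows the scheme of \cite{AF} --- direct minimization of $A_p$ over the line, with the cut-off constructions of Lemmas 3.1 and 3.2 of \cite{AF} carrying over to general $p$ because they reduce the gradient and potential terms pointwise --- and deferring details to \cite{nikos-thesis}. Within that scheme, your Young-inequality lower bound $A_p\ge d_W(a^-,a^+)>0$, the compactness from the global bound on $\int|u_x'|^p$ via Morrey and Arzel\`a--Ascoli, lower semicontinuity, the identification of limits at $\pm\infty$ from finite action plus uniform continuity, and the passage to the Euler--Lagrange equation with the equipartition first integral are all sound and at the right level of detail.

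The one step whose justification, as written, does not hold is the cut-off lemma you use to normalise so that $\{x:|u_n(x)-a^-|<\rho\}$ is a half-line. The mechanism you invoke --- radial monotonicity of $W$ together with a $1$-Lipschitz radial retraction --- compares values of $W$ along a fixed ray only inside the monotonicity ball $B_R(a^-)$ of Hypothesis \ref{hone}. It says nothing about an excursion that leaves $B_R(a^-)$, in particular one that approaches $a^+$, where $W$ is smaller than anywhere on $\partial B_\rho(a^-)$, so retraction can \emph{increase} the potential term pointwise; worse, if the excursion lingers near $a^+$ for a long time at negligible potential cost, any replacement confined near $a^-$ over the same long interval must be taken essentially down to $a^-$ with parameters depending on the length of that interval, and the comparison becomes an energy estimate rather than a pointwise one. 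This large-excursion regime is precisely what the second cut-off construction of \cite{AF} is designed for, and it is the part that genuinely has to be redone for general $p$. Note, however, that your own machinery already provides a bypass: translate so that $x=0$ is the \emph{last} exit time from $\overline{B_\rho(a^-)}$ (no modification of $u_n$ at all, giving $|u_n(x)-a^-|\ge\rho$ for $x\ge 0$, hence $U(+\infty)=a^+$), and exclude $U(-\infty)=a^+$ by the $d_W$ comparison, since in that case $A_p(U)\ge 2\,d_W(a^-,a^+)-o_\rho(1)>m$ for $\rho$ small, contradicting $A_p(U)\le m$. With that repair (or with a correctly stated $p$-version of the two cut-off lemmas) your argument closes.
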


\begin{theorem}
\label{thmtwo}
Let $U$ be the minimizer provided by Theorem \ref{thmone} above and let $R$ 
be as defined in Hypothesis \ref{hone}. Also let $\mathcal{A}$ be the set that 
consists of all functions $u \in W_{\text{loc}}^{1,p} (\mathbb{R}; \mathbb{R}^{N})$ 
for which there exist $x_{u}^{-} < x_{u}^{+}$ (depending on $u$) such that 
\begin{equation*}
\begin{cases}
| u (x) - a^{-} | \leq R / 2, &\text{for all } x \leq x_{u}^{-},\\
| u (x) - a^{+} | \leq R / 2, &\text{for all } x \geq x_{u}^{+}.
\end{cases}
\end{equation*}
Then,
\[ A_{p} (U) = \min_{u \in \mathcal{A}} A_{p} (u). \]
\end{theorem}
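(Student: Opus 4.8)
The plan is to compare with \emph{genuine} connections and then transfer the minimality of $U$ across that comparison. Recall first that $U$ is not merely some connection: by the variational construction behind Theorem~\ref{thmone} (carried out in \cite{nikos-thesis}, after \cite{AF} for $p=2$), $U$ minimises $A_{p}$ over the class of all connections, and since $A_{p}$ is invariant under translation in $x$ this means
\[
A_{p}(U)=\inf\bigl\{A_{p}(v):v\in W^{1,p}_{\text{loc}}(\mathbb{R};\mathbb{R}^{N}),\ \lim_{x\to\pm\infty}v(x)=a^{\pm}\bigr\}\in(0,\infty).
\]
Because $R<|a^{+}-a^{-}|$, the closed balls $\overline{B}(a^{-},R/2)$ and $\overline{B}(a^{+},R/2)$ are disjoint and neither contains the other minimum, and a short continuity argument then shows that every connection — in particular $U$ — lies in $\mathcal{A}$. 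Hence $\inf_{\mathcal{A}}A_{p}\le A_{p}(U)<\infty$, and it remains only to prove $A_{p}(u)\ge A_{p}(U)$ for every $u\in\mathcal{A}$; together with $U\in\mathcal{A}$ this gives $A_{p}(U)=\min_{\mathcal{A}}A_{p}$.

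So fix $u\in\mathcal{A}$, which we may assume has $A_{p}(u)<\infty$. On $(-\infty,x_{u}^{-}]$ the trajectory stays in $\overline{B}(a^{-},R/2)$, where $W$ is continuous and vanishes only at $a^{-}$, hence is bounded below by a positive constant on every annulus $\{\delta\le|\cdot-a^{-}|\le R/2\}$; finiteness of $\int_{\mathbb{R}}W(u)$ then forces $\{x\le x_{u}^{-}:|u(x)-a^{-}|\ge\delta\}$ to have finite measure for each $\delta>0$, so $\liminf_{x\to-\infty}|u(x)-a^{-}|=0$, and symmetrically $\liminf_{x\to+\infty}|u(x)-a^{+}|=0$. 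Next, a well can be joined to a nearby point cheaply: since $W\in C^{2}$ and $\nabla W(a^{-})=0$ ($a^{-}$ being an interior minimum) one has $W(a^{-}+\sigma\xi)\le C\sigma^{2}$ for small $\sigma$, uniformly in $\xi\in S^{N-1}$, so parametrising the ray from $a^{-}$ to a point $b$ at distance $\rho\le R/2$ as $x\mapsto a^{-}+\rho e^{x-s}(b-a^{-})/\rho$ on $(-\infty,s]$ and substituting $\sigma=\rho e^{x-s}$, the action of this path equals $\int_{0}^{\rho}\!\bigl(\sigma^{p-1}/p+W(a^{-}+\sigma(b-a^{-})/\rho)/(q\sigma)\bigr)\,d\sigma$, which is finite (using $p>1$ and the $C^{2}$ bound) and tends to $0$ as $\rho\to0$; the same holds, run backwards, for rays ending at $a^{+}$.

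Now fix $\varepsilon>0$. Using the two $\liminf$'s, choose $x^{-}<x_{u}^{-}$ and $x^{+}>x_{u}^{+}$ with $|u(x^{-})-a^{-}|$ and $|u(x^{+})-a^{+}|$ small enough that the radial path $v^{-}$ from $a^{-}$ to $u(x^{-})$ on $(-\infty,x^{-}]$ and the radial path $v^{+}$ from $u(x^{+})$ to $a^{+}$ on $[x^{+},\infty)$ each have action at most $\varepsilon/2$. Define $\hat u:=v^{-}$ on $(-\infty,x^{-}]$, $\hat u:=u$ on $[x^{-},x^{+}]$, and $\hat u:=v^{+}$ on $[x^{+},\infty)$; the pieces match at $x^{\pm}$, so $\hat u\in W^{1,p}_{\text{loc}}$, and $\hat u(x)\to a^{\pm}$ as $x\to\pm\infty$, that is, $\hat u$ is a connection. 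Since the integrand of $A_{p}$ is nonnegative, the action of $u$ on $[x^{-},x^{+}]$ is at most $A_{p}(u)$, hence $A_{p}(U)\le A_{p}(\hat u)\le\varepsilon/2+A_{p}(u)+\varepsilon/2$; letting $\varepsilon\downarrow0$ gives $A_{p}(U)\le A_{p}(u)$, as required.

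The single genuinely non-elementary step is the opening one — that $U$ realises the infimum of $A_{p}$ over \emph{all} connections — for which I would simply invoke the proof of Theorem~\ref{thmone}; everything downstream of it is the soft `repair the tails' construction above. The only points deserving a few careful lines rather than a sentence are the continuity argument that places every connection inside $\mathcal{A}$ (one must produce admissible $x_{v}^{-}<x_{v}^{+}$ from disjointness of the two balls) and the estimate on the radial path, where $p>1$ gives integrability of $\sigma^{p-1}$ near $0$ while the $C^{2}$ bound on $W$ near the wells gives both integrability of $W(\cdot)/\sigma$ and its vanishing as $\rho\to0$; the strict radial monotonicity in Hypothesis~\ref{hone} makes that estimate cleaner — the rays then lie where $W>0$ and is increasing — but is not otherwise needed for this reduction.
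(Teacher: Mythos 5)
The soft parts of your argument are correct and carefully executed: every connection does lie in $\mathcal{A}$ (so $U\in\mathcal{A}$), the $\liminf$ argument from the integrability of $W(u)$ on the tail regions is fine, and the exponential radial tails with action of order $\rho^{p}/p^{2}+C\rho^{2}/(2q)$ are estimated correctly, so you have genuinely proved that the infimum of $A_{p}$ over $\mathcal{A}$ coincides with its infimum over the class of connections. The difficulty is that the opening step, which you propose to ``simply invoke'' from the proof of Theorem \ref{thmone}, carries the entire weight of the theorem. Theorem \ref{thmone} as stated asserts only that a connection exists; the property you need --- $A_{p}(U)\le A_{p}(v)$ for every connection $v$ --- is, by your own reduction together with the inclusion of connections in $\mathcal{A}$, equivalent to Theorem \ref{thmtwo} itself. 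So as written the proof begs the question unless that property is actually delivered by the construction, and this cannot be taken for granted: in the scheme the paper follows (Alikakos--Fusco, and the thesis \cite{nikos-thesis}; the paper states both theorems without proof), $U$ is produced by a \emph{constrained} minimization, the constraint being introduced precisely to obtain local minimizers and being removed only to show that $U$ solves the Euler--Lagrange equation. A priori this makes $U$ minimal only against competitors respecting the constraint, not against arbitrary elements of $\mathcal{A}$ or arbitrary connections.

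The missing content is therefore the comparison step: given an arbitrary $u\in\mathcal{A}$ (or an arbitrary connection), one must modify it --- by translations and by the cut-and-replace constructions of Lemmas 3.1 and 3.2 of \cite{AF}, which the paper explicitly flags as the ingredients that ``succeed in the pointwise reduction of both the gradient and the potential terms'' --- into an admissible competitor for the constrained problem without increasing $A_{p}$, and only then conclude $A_{p}(U)\le A_{p}(u)$. None of this appears in your proposal, and it is exactly where Hypothesis \ref{hone} enters: the strict radial monotonicity of $W$ near $a^{\pm}$ is what makes those replacements action-decreasing, so your closing remark that it is ``not otherwise needed'' signals that the essential mechanism of the proof has been bypassed rather than reproduced. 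Your tail-surgery lemma would be a useful auxiliary step in a complete argument, but by itself it only reshuffles the statement; the theorem still needs the global comparison against $U$.
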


The above theorems yield the existence of the desired heteroclinic connection and 
also a variational characterization. Our approach (for full details, 
see \cite{nikos-thesis}) follows the lines of the method 
established in \cite{AF} and therefore it is based on the minimization of the action 
functional $A_{p}$ over the whole real line. The proof of existence of a connection 
is generally straightforward, since the test functions constructed in Lemmas 3.1 
and 3.2 in \cite{AF} succeed in the pointwise reduction 
of both the gradient and the potential terms.

\section{The connection problem on the complex plane for potentials possessing several global minima}

In this section we extend to the $p$-Laplacian operator the existence and uniqueness results 
in Alikakos, Betel\'u, and Chen \cite{ABC}. Here, we consider potentials possessing 
three or more global minima and restrict ourselves to the planar case $N=2$ 
for which we identify $\mathbb{R}^{2}$ with the complex plane $\mathbb{C}$.

We tackle the problem by utilizing Jacobi's principle, which deals with 
curves and detects geodesics. Specifically, one considers the length functional
\begin{equation*}
L_{p}(u)=\int_{t_{1}}^{t_{2}} \sqrt[q]{W(u)} |u_{x}|\, dx,
\end{equation*}
which is independent of parametrizations and hence is more properly denoted by
\[ L_{p}(\Gamma)=\int_{\Gamma}\sqrt[q]{W(\Gamma)}\, d\Gamma . \]
Notice that the functional $A_{p}$ is defined on functions while the 
functional $L_{p}$ is defined on curves. The relationship between the two 
is that critical points of $L_{p}$ parametrized under the 
equipartition parametrization (that is, a parameter $t$ is such that 
$|u_{t}|^{p} = W (u)$, for all $t$ in an interval $(a, b)$) render critical 
points of $A_{p}$. In this case, we study the system of ordinary differential equations 
\[ (|u_{x}|^{p-2} u_{x}^{i})_{x} - \frac{1}{q} \frac{\partial W(u^{1}, u^{2})}{\partial u^{i}} = 0, \text{ for } i=1,2, \]
where $u = (u^{1}, u^{2}): \mathbb{R} \to \mathbb{R}^{2}$. We identify 
$u = (u^{1}, u^{2})$ with the complex number $z = u^{1} + i u^{2}$, 
and similarly we write $W(u)$ as $W(z)$. Since $W(\cdot)$ is non-negative,
we can write $W(z) = |f(z)|^{q}$ for some analytic function $f$. We can 
also verify that the equations in \eqref{two} are equivalent to 
\[ (|z_{x}|^{p-2} z_{x})_{x} = (f \overline{f})^{\frac{q-2}{2}} f \overline{f'}, \]
where the bar represents complex conjugation. 

We begin by sketching the method for the standard triple-well potential 
$W(z) = |z^{3} - 1|^{q}$, the minima of which are taken at the points 
$1$, $e^{\frac{2 \pi i}{3}}$, and $e^{\frac{-2 \pi i}{3}}$. We will 
construct the connection between $a=1$ and $b=e^{\frac{2\pi i}{3}}$, 
by considering the variational problem
\[ \min{ L_{p} (u)} \]
along embeddings on the plane connecting $a$ to $b$. Since the functional 
$L_{p}$ is independent of parametrizations, we choose $u: (0,1) \to \mathbb{R}^{2}$ with
$u(0) = a$, $u(1) = b$, and set $z (t) = u^{1} (t) + u^{2} (t)$. Then,
\[ L_{p} (u) = \int_{0}^{1} |z ' (\tau)| |z^{3} (\tau) - 1| \, d\tau = \int_{0}^{1} \left| \frac{d}{d\tau} g (z (\tau)) \right| d\tau = \int_{0}^{1} |w' (\tau)| \, d\tau, \]
where $w = g(z) = z - z^{4}/4$. It is clear that minimizing $L_{p}$ over the 
set of curves connecting $a$ to $b$  is reduced to the simple problem of 
minimizing the length functional on the $w$-plane for curves connecting 
$g(a) = 3/4$ to $g (b) = 3 e^{\frac{2 \pi i}{3}}/4$, which of course is 
minimized by the line segment connecting these image points. Now, 
by choosing the following parametrization for the line segment 
\[ g (z (\tau)) = \tau g(a) + (1- \tau) g(b) = \frac{3}{4} ( \tau + (1- \tau) e^{\frac{2 \pi i}{3}} ), \text{ for } 0 \leq \tau \leq1, \]
we can show that the curve $z (\tau) = r (\tau) e^{i \theta (\tau)}$ 
satisfies the parameter-free equation
\[ 4 r \cos{ (\theta - \frac{\pi}{3} ) } = r^{4} \cos{ ( 4 \theta - \frac{\pi}{3} )} + 3 \cos{ \frac{\pi}{3}}, \text{ for } 0 \leq \theta \leq \frac{\pi}{3} \text{ and } 0<r<1, \]
which is exactly the same equation presented in \cite{ABC} for $p=q=2$. 
The dependence on $p$ is through the parametrization
\[ \frac{dt}{dx} = \frac{\sqrt[p]{W(z(t))}}{|z'(t)|}, \text{ with } t(0)=\frac{1}{2}, \]
which leads to the connection $u(x) = z(t(x))$. So, naturally, one would 
expect that the theory presented in \cite{ABC} could be extended for 
any $p>1$, and although this is true, it is not without some extra technical effort.

\begin{theorem}
\label{thmthree}
Let $W(u_{1},u_{2}) = |f(z)|^{q}$, where $f = g'$ is holomorphic in an 
open subset $D$ of $\mathbb{R}^{2}$, and let the point $(u_{1}, u_{2})$ 
be identified with the complex number $z(x) = u_{1} (x) + i u_{2} (x)$. 
Additionally, let $\gamma = \{ u(x): x \in (a,b) \}$ be a smooth 
curve in $D$ and $x$ an equipartition parameter, that is, $|u_{x}|^{p} = W(u)$. 
Also, set $\alpha = u(a)$ and $\beta = u(b)$. Then, $u$ is a solution to  
\begin{equation}
\label{three}
(|u_{x}|^{p-2} u_{x})_{x} - \frac{1}{q} \nabla W (u) = 0, \text{ in } (a,b)
\end{equation}
if and only if
\begin{equation}
\label{four}
\text{\em Im}{ \left( \frac{g(z) - g(\alpha)}{g(\beta) - g(\alpha)} \right)} = 0, \text{ for all } z \in \gamma.
\end{equation}
\end{theorem}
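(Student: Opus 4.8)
The plan is to pass to the complex picture, where after the substitution $w=g(z)$ the Euler--Lagrange system reduces to the statement that the image curve $g(\gamma)$ is a straight line segment; condition \eqref{four} simply records that $g(\gamma)$ lies on the line through $g(\alpha)$ and $g(\beta)$. Two preliminary facts are needed. First, writing $W=|f|^{q}=(f\bar f)^{q/2}$ with $f=g'$ holomorphic and using the Wirtinger relation $\partial_{u^{1}}+i\,\partial_{u^{2}}=2\,\partial_{\bar z}$ together with $\partial_{\bar z}\bar f=\overline{f'}$ and $\partial_{\bar z}f=0$, one gets that $\tfrac1q\nabla W(u)$ corresponds under $\mathbb{R}^{2}\cong\mathbb{C}$ to $(f\bar f)^{(q-2)/2}f\overline{f'}$, so that \eqref{three} is equivalent to $\bigl(|z_{x}|^{p-2}z_{x}\bigr)_{x}=|f(z)|^{\,q-2}f(z)\overline{f'(z)}$, which is the complex form already recorded in the text for \eqref{two}. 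Second, from $1/p+1/q=1$ one has $p+q=pq$, $q/p=q-1$ and $(p-1)(q-1)=1$; in particular the equipartition hypothesis $|z_{x}|^{p}=W(u)=|f(z)|^{q}$ gives $|z_{x}|=|f(z)|^{q-1}$, hence $|z_{x}|^{p-2}=|f(z)|^{\,2-q}$ and $|z_{x}|^{2}=|f(z)|^{\,2q-2}$ wherever $f(z)\neq0$; and for a regular smooth curve $z_{x}\neq0$ on $(a,b)$, so equipartition forces $f(z)\neq0$ there and $w_{x}=f(z)z_{x}\neq0$ on $(a,b)$.

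Next I would set $w:=g(z(x))$ and compute. The chain rule gives $w_{x}=f(z)z_{x}$ and $w_{xx}=f'(z)z_{x}^{2}+f(z)z_{xx}$, and a short manipulation yields the purely algebraic identity
\[
\operatorname{Im}\!\bigl(\overline{w_{x}}\,w_{xx}\bigr)=|z_{x}|^{2}\operatorname{Im}\!\bigl(\overline{f(z)}\,f'(z)z_{x}\bigr)+|f(z)|^{2}\operatorname{Im}\!\bigl(\overline{z_{x}}\,z_{xx}\bigr).
\]
I would then multiply the complex form of \eqref{three} by $\overline{z_{x}}$ and split it into real and imaginary parts. Using the exponent relations above, the real part is seen to coincide with $\tfrac{d}{dx}\bigl(|z_{x}|^{p}-|f(z)|^{q}\bigr)=0$, i.e.\ with the derivative of the equipartition constraint, hence is automatically satisfied; therefore, under equipartition, \eqref{three} is equivalent to its imaginary part alone, which after simplification with $|z_{x}|^{2}=|f(z)|^{2q-2}$ becomes precisely $\operatorname{Im}(\overline{w_{x}}\,w_{xx})=0$, by the displayed identity.

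It remains to convert this into \eqref{four}. Since $w_{x}\neq0$ on $(a,b)$, the condition $\operatorname{Im}(\overline{w_{x}}\,w_{xx})\equiv0$ says that $w_{xx}/w_{x}$ is real, i.e.\ $\tfrac{d}{dx}\arg w_{x}\equiv0$, so $w_{x}$ has constant argument and $w(x)-w(a)$ is always a real multiple of $w(b)-w(a)$; since $w(a)=g(\alpha)$ and $w(b)=g(\beta)$, this is exactly $\operatorname{Im}\!\bigl((g(z)-g(\alpha))/(g(\beta)-g(\alpha))\bigr)=0$ for all $z\in\gamma$. Conversely, if \eqref{four} holds then $w(x)=g(\alpha)+s(x)\bigl(g(\beta)-g(\alpha)\bigr)$ for a real-valued function $s$, so $w_{x}$ and $w_{xx}$ are real multiples of $g(\beta)-g(\alpha)$ and hence $\operatorname{Im}(\overline{w_{x}}\,w_{xx})=0$; running the chain of equivalences backwards recovers \eqref{three}.

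The genuinely delicate point is the claim in the second paragraph that the real part of the complex Euler--Lagrange equation is automatically satisfied given equipartition --- equivalently, that the first integral $|u_{x}|^{p}-W(u)=\mathrm{const}$ of \eqref{three} already accounts for one of the two real scalar equations hidden in \eqref{three}, so that the single scalar condition \eqref{four} is enough to carry the remaining one. Getting this accounting exactly right, together with the subordinate but necessary verification that $f(z)\neq0$ along the interior of $\gamma$ (so that the divisions above and the argument of $w_{x}$ make sense), is where essentially all of the work lies; the remaining exponent bookkeeping is routine.
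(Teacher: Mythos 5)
Your proposal is correct, and it takes a genuinely different route from the paper. You multiply the complex form of \eqref{three} by $\overline{z_x}$ and split into real and imaginary parts: the real part is (up to the factor $2/q$, using $p/q=p-1$) exactly $\frac{d}{dx}\bigl(|z_x|^p-|f(z)|^q\bigr)$, hence vanishes identically under equipartition, while the imaginary part, after multiplying by $|f|^q$ and using $|z_x|^{p-2}=|f|^{2-q}$ and $|z_x|^2=|f|^{2q-2}$, becomes $|f|^2\operatorname{Im}(\overline{z_x}z_{xx})+|z_x|^2\operatorname{Im}(\overline{f}f'z_x)=\operatorname{Im}(\overline{w_x}w_{xx})=0$ with $w=g(z)$; I checked these identities and they are exactly right, so constancy of $\arg w_x$ gives \eqref{four}, and the chain reverses. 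The paper instead argues in two separate halves: for the forward direction it reparametrizes by the weighted arclength $l$ and shows $d^2g(z)/dl^2=0$ by combining \eqref{seven}, the differentiated equipartition relation \eqref{eight} and \eqref{five}; for the converse it first derives the first-order equation $z_x=C(f\overline f)^{q/2}/f$ from \eqref{four} and then verifies \eqref{seven} by a direct computation of $z_{xx}$, $\overline{z_{xx}}$, etc. Your version buys economy: both implications come from one equivalence, with no reparametrization and none of the brute-force algebra of the paper's converse. The paper's version buys byproducts that it uses immediately after the theorem --- the unit-speed affine formula $g(z(l))=\frac{L-l}{L}g(\alpha)+\frac{l}{L}g(\beta)$, the identity $L=|g(\beta)-g(\alpha)|$ for the transition energy, and the first-order equation \eqref{fifteen} feeding into the uniqueness discussion; these are recoverable from your argument (e.g.\ $w_x=C|f|^q$ with $|C|=1$ follows from constant argument and $|w_x|=|f||z_x|=|f|^q$), but you would need to extract them explicitly. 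Finally, the nondegeneracy point you flag ($z_x\neq0$, hence $f(z)\neq0$, on the interior, so that multiplying by $\overline{z_x}$ is an equivalence and $\arg w_x$ is defined) is indeed needed, but it is the same implicit assumption the paper makes when it divides by $f^{q}\overline{f}^{\,q+1}$, so it is not a defect of your approach relative to theirs.
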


\begin{proof}
Let $u = (u_{1}, u_{2}): (a,b) \to D$ be a solution to \eqref{three} with $|u_{x}|^{p} = W(u)$. Also,
let $L$ be the total arclength and $l$ the arclength parameter defined by 
\begin{equation*}
L = \int_{a}^{b} |u_{x}| \sqrt[q]{W(u)} \, dx \text{ and }l = \int_{a}^{x} |u_{x} (y) | \sqrt[q]{W(u(y))} \, dy.
\end{equation*}
We will show that $g(\gamma)$ is the line segment $[ g (\alpha), g (\beta) ]$. 
We begin by modifying equation \eqref{three}. Here we note that the equipartition relationship gives
\begin{equation}
\label{five}
|z_{x}|^{p} = W(u) = (f \overline{f})^{\frac{q}{2}}
\end{equation}
and that since
\begin{align*}
& \frac{\partial W}{\partial u_{1}} = (|f(z)|^{q})_{u_{1}} = [ ( f \overline{f})^{\frac{q}{2}} ]_{u_{1}} = \frac{q}{2} (f \overline{f})^{\frac{q-2}{2}} (f' \overline{f} + f \overline{f'}),\\[4pt]
& \frac{\partial W}{\partial u_{2}} = (|f(z)|^{q})_{u_{2}} = \frac{q}{2} (f \overline{f})^{\frac{q-2}{2}} (i f' \overline{f} - i f \overline{f'}),
\end{align*}
we have
\begin{equation}
\label{six}
\frac{\partial W}{\partial u_{1}} + i \frac{\partial W}{\partial u_{2}} = \frac{q}{2} (f \overline{f})^{\frac{q-2}{2}} (f' \overline{f} + f \overline{f'} - f' \overline{f} + f \overline{f'}) = q (f \overline{f})^{\frac{q-2}{2}} f \overline{f'}.
\end{equation}
Hence, based on \eqref{six}, equation \eqref{three} can be written as
\begin{align*}
0 & = (|u_{x}|^{p-2} u_{x})_{x} - \frac{1}{q} \nabla W (u)
\\
& = \frac{p}{2} |z_{x}|^{p-2} z_{xx} + (\frac{p}{2} - 1) |z_{x}|^{p-4} z_{x}^{2} \overline{z_{xx}} - (f \overline{f})^{\frac{q-2}{2}} f \overline{f'}.
\end{align*}
Now, by multiplying the above equation by $| z_{x} |^{4 - p}$ and by further simplifying, 
we see that equation \eqref{three} is equivalent to 
\begin{equation}
\label{seven}
p |z_{x}|^{2} z_{xx} + (p-2) z_{x}^{2} \overline{z_{xx}} - 2 |z_{x}|^{2(3-p)} f \overline{f'} = 0. 
\end{equation}
In addition, by differentiating the equipartition relationship $|z_{x}|^{p} = (f \overline{f})^{\frac{q}{2}}$, we obtain the equation
\[
p |z_{x}|^{p-2} (z_{xx} \overline{z_{x}} + z_{x} \overline{z_{xx}}) = q (f \overline{f})^{\frac{q-2}{2}} (f' \overline{f} z_{x} + f \overline{f'} \overline{z_{x}}), \nonumber
\]
which simplified, is equivalent to
\begin{equation}
\label{eight} 
(p-1) |z_{x}|^{2(p-2)} (z_{xx} \overline{z_{x}} + z_{x} \overline{z_{xx}}) = f' \overline{f} z_{x} + f \overline{f'} \overline{z_{x}}.
\end{equation}
Finally, differentiating the function $g$, we have
\[
\frac{dg(z)}{dl} = \frac{g'(z) z_{x}}{(f \overline{f})^{\frac{q}{2}}} = \frac{f z_{x}}{(f \overline{f})^{\frac{q}{2}}} = \frac{z_{x}}{f^{\frac{q}{2} - 1} \overline{f}^{\frac{q}{2}}} = \frac{z_{x}}{f^{\frac{q-2}{2}} \overline{f} ^{\frac{q}{2}}}
\]
and
\begin{align}
\frac{d^{2}g(z)}{dl^{2}} & = \frac{f \overline{f} z_{xx} + \frac{2-q}{2} \overline{f} f ' z_{x}^{2} - \frac{q}{2} |z_{x}|^{2} f \overline{f'}}{f^{q} \overline{f}^{q+1}} \nonumber
\\
& =\frac{f \overline{f} z_{xx} + \frac{p-2}{2(p-1)} \overline{f} f ' z_{x}^{2} - \frac{p}{2(p-1)} |z_{x}|^{2} f \overline{f'}}{f^{q} \overline{f}^{q+1}} \nonumber
\\
& =\frac{2 (p-1) f \overline{f} z_{xx} + (p-2) \overline{f} f ' z_{x}^{2} - p |z_{x}|^{2} f \overline{f'}}{2(p-1)f^{q} \overline{f}^{q+1}} \nonumber
\\
& =\frac{2 (p-1) f \overline{f} z_{xx} + z_{x} ( (p-2) \overline{f} f ' z_{x} - p \overline{z_{x}} f \overline{f'} ) }{2 (p-1) f^{q} \overline{f}^{q+1}} \nonumber
\\
\label{nine}
& =\frac{2 (p-1) f \overline{f} z_{xx} + z_{x} ( (p-2) ( \overline{f} f ' z_{x} + \overline{z_{x}} f \overline{f'} ) - 2 (p-1) \overline{z_{x}} f \overline{f'} ) }{2 (p-1) f^{q} \overline{f}^{q+1}}.
\end{align}
Now utilizing \eqref{eight}, equation \eqref{nine} becomes 
\begin{align}
\label{ten}
\frac{d^{2}g(z)}{dl^{2}} & =  \frac{2 f \overline{f} z_{xx} + (p-2) |z_{x}|^{2} |z_{x}|^{2p-4} z_{xx}}{2 f^{q} \overline{f}^{q+1}} \nonumber \\
& \quad + \frac{(p-2) |z_{x}|^{2 (p-2)} z_{x}^{2} \overline{z_{xx}} - 2 |z_{x}|^{2} f \overline{f'}}{2 f^{q} \overline{f}^{q+1}} 
\end{align}
and by virtue of \eqref{five}, equation \eqref{ten} becomes 
\begin{align*}
\frac{d^{2}g(z)}{dl^{2}} & = \frac{2 |z_{x}|^{2 (p-1)} z_{xx} + (p-2) |z_{x}|^{2 (p-1)} z_{xx}}{2 f^{q} \overline{f}^{q+1}} \\
& \quad + \frac{(p-2) |z_{x}|^{2 (p-2)} z_{x}^{2} \overline{z_{xx}} - 2 |z_{x}|^{2} f \overline{f'}}{2 f^{q} \overline{f}^{q+1}},
\end{align*}
which after simplification can be written as
\begin{align*}
\frac{d^{2}g(z)}{dl^{2}} & = \frac{p |z_{x}|^{2 (p-1)} z_{xx} + (p-2) |z_{x}|^{2 (p-2)} z_{x}^{2} \overline{z_{xx}} - 2 |z_{x}|^{2} f \overline{f'}}{2 f^{q} \overline{f}^{q+1}}
\\
& = \frac{p |z_{x}|^{2} z_{xx} + (p-2) z_{x}^{2} \overline{z_{xx}} - 2 |z_{x}|^{2 (3-p)} f \overline{f'}}{2 f^{q} \overline{f}^{q+1} |z_{x}|^{2 (2-p)}} \\
& = 0,
\end{align*}
as a result of \eqref{seven}. Thus $dg(z) / dl = C$ is constant. 
Integrating this equation and evaluating it at $l = L$ gives 
respectively $g(z) = g(\alpha) + C l$ and $C l = g(\beta) - g(\alpha)$. In addition, we have 
\[
\left| \frac{dg(z)}{dl} \right| = \left| \frac{z_{x}}{f^{\frac{q-2}{2}} \overline{f}^{\frac{q}{2}}} \right| = \frac{ |z_{x}| |z_{x}|^{\frac{p}{q}}}{ |z_{x}|^{p}} = \frac{|z_{x}| |z_{x}|^{p-1}}{|z_{x}|^{p}} = 1 = |C|,
\]
hence $L = |g (\beta) - g (\alpha)|$ and $C = \frac{g(\beta) - g(\alpha)}{|g(\beta) - g(\alpha)|}$,
which means that 
\[
g (z (l)) = \frac{L-l}{L} g(\alpha) + \frac{l}{L} g(\beta).
\]
Thus, $g$ is the desired line segment.

For the converse, assume that $\gamma = u ( (a,b) )$ satisfies \eqref{four} 
and the parameter $x$ is an equipartition parameter for $u$. Then, equation \eqref{four} can be written as 
\[
g (z) - g (\alpha) = s (x) ( g (\beta) - g (\alpha) ),
\]
where $s (x)$ is a real-valued function. Upon differentiation, we obtain 
\[
s_{x} ( g (\beta) - g (\alpha) ) = g' (z) z_{x} = f (z) z_{x}.
\]
This equation implies that 
\[
|s_{x}| = \frac{|f(z)| |z_{x}|}{ |g (\beta) - g (\alpha)|} = \frac{|f(z)|^{q}}{|g (\beta) - g (\alpha)|} > 0
\]
and since $s (x) \in \mathbb{R}$, with $s (a) = 0$ and $s (b) = 1$, we must have $s_{x} (x) > 0$. Hence,
\[
s_{x} (x) = \frac{|f(z)|^{q}}{|g (\beta) - g (\alpha)|}.
\]
Consequently, 
\begin{equation}
\label{eleven}
z_{x} = \frac{s_{x} ( g (\beta) - g (\alpha) ) }{f(z)} = C \frac{(f \overline{f})^{\frac{q}{2}}}{f},
\end{equation}
where $C = \frac{g (\beta) - g (\alpha)}{|g (\beta) - g (\alpha)|}$.

Lastly, utilizing \eqref{eleven}, we construct the differential equation \eqref{seven} as follows. First, we have 
\begin{align*}
& |z_{x}|^{2} = (f \overline{f})^{q-1},  
\\
& z_{xx} = \frac{q-2}{2} C^{2} f^{q-3} \overline{f}^{q} f ' + \frac{q}{2} f^{q-1} \overline{f}^{q-2} \overline{f'},
\\
& z_{x}^{2} = C^{2} f^{q-2} \overline{f}^{q},
\\
& \overline{z_{xx}} = \frac{q-2}{2} \overline{C}^{2} \overline{f}^{q-3} f^{q} \overline{f '} + \frac{q}{2} \overline{f}^{q-1} f^{q-2} f ',
\end{align*}
so from the above relations it follows that
\begin{align}
\label{twelve}
p |z_{x}|^{2} z_{xx} & = \frac{p q - 2 p}{2} C^{2} f^{2 q - 4} \overline{f}^{2 q - 1} f ' + \frac{p q}{2} f^{2 q - 2} \overline{f}^{2 q - 3} \overline{f '}
\\
\label{thirteen}
(p - 2) z_{x}^{2} \overline{z_{xx}} & = \frac{p q - 2 p - 2 q + 4}{2} f^{2 q - 2} \overline{f}^{2 q - 3} \overline{f '}
\\
& \quad + \frac{p q - 2 q}{2} C^{2} f^{2 q - 4} \overline{f}^{2 q - 1} f '. \nonumber
\end{align}
Adding \eqref{twelve} and \eqref{thirteen} gives
\begin{align}
\label{fourteen}
p |z_{x}|^{2} z_{xx} + (p - 2) z_{x}^{2} \overline{z_{xx}} & =  (p q - p - q ) C^{2} f^{2 q - 4} \overline{f}^{2 q - 1} f '
\\
& \quad + (p q - p - q + 2 ) f^{2 q - 2} \overline{f}^{2 q - 3} \overline{f '}, \nonumber
\end{align}
and utilizing the fact that $|z_{x}|^{2 (3 - p)} = (f \overline{f})^{2 q - 3}$, equation \eqref{fourteen} becomes
\[
p |z_{x}|^{2} z_{xx} + (p - 2) z_{x}^{2} \overline{z_{xx}} = 2 (f \overline{f})^{2 q - 3} f \overline{f '} = 2 |z_{x}|^{2 (3 - p)} f \overline{f '}.
\]
This equation is equivalent to $u$ being a solution to $q (|u_{x}|^{p - 2} u_{x})_{x} = W_{u} (u)$ and the proof is complete.
\end{proof}

The proof implies that \eqref{three} is equivalent to the first-order ordinary differential equation
\begin{equation}
\label{fifteen}
z_{x} = C \frac{(f \overline{f})^{\frac{q}{2}}}{f}, \text{ for } C \in \mathbb{C} \text{ with } |C| = 1.
\end{equation}
Multiplying \eqref{fifteen} by $\frac{f (z)}{C}$ gives
\[
\frac{d}{dx} \frac{g(z)}{C} = |f(z)|^{q} = W > 0
\]
and integrating this equation gives
\[
\text{Im} \left( \frac{g (z) - g (\alpha)}{C} \right) = 0
\]
and
\[
\frac{g (z) - g (\alpha)}{C} = \int_{a}^{x} W (z (t)) dt = l.
\]
This in particular implies that the map $x \mapsto g (z (x))$ is a one-to-one map. 
Also, in the case of $u$ being a solution, the theorem states that the set 
$g(\gamma) = \{ g(z): z \in \gamma \} $ is a line segment with end points 
$g(\alpha)$ and $g(\beta)$ and that the partial transition energy is given by 
\begin{align*}
\int_{a}^{y} \left( \frac{|u_{x}|^{p}}{p} + \frac{W(u)}{q} \right) dx & =  \int_{a}^{y} |u_{x}| \sqrt[q]{W(u)} dx \\
& =  \int_{a}^{y} \left| \frac{d}{dx} g(u) \right| dx \\ 
& =  |g (u(y)) - g (\alpha)|, \text{ for all } y \in (a,b].
\end{align*}

\begin{theorem}
\label{thmfour}
There exists at most one trajectory connecting any two minima of a holomorphic
potential, that is, if $W(z)=|f(z)|^{q}$, where $f$ is holomorphic on $\mathbb{C}$,
then there exists at most one solution of \eqref{two} that connects any two roots of $W(z)=0$.
\end{theorem}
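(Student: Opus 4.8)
The plan is to argue by contradiction and manufacture, from two distinct connecting trajectories, a nonconstant entire function whose imaginary part is constant on an open set. So suppose $\alpha\ne\beta$ are roots of $W(z)=|f(z)|^{q}=0$ and that two heteroclinic connections of \eqref{two} between $\alpha$ and $\beta$ have distinct images $\gamma_{1}\ne\gamma_{2}$. A connection has finite action, so multiplying \eqref{two} by $u_{x}$ and integrating gives the first integral $|u_{x}|^{p}-W(u)\equiv\mathrm{const}$, and the limits at $\pm\infty$ force the constant to vanish; thus $x$ is an equipartition parameter and Theorem \ref{thmthree} applies on every finite subinterval. Letting the subintervals exhaust $\mathbb R$ (so the endpoint values converge to $g(\alpha)$ and $g(\beta)$) and using $\tfrac{d}{dx}\tfrac{g(z)}{C}=W>0$ from the discussion after \eqref{fifteen}, one obtains that along each trajectory the map $g$ is a strictly monotone affine parametrization of the segment $[g(\alpha),g(\beta)]$: reparametrizing by $g$-arclength $l\in[0,L]$, $L=|g(\beta)-g(\alpha)|$,
\[
g\bigl(z_{j}(l)\bigr)=g(\alpha)+C\,l,\qquad C=\frac{g(\beta)-g(\alpha)}{|g(\beta)-g(\alpha)|},
\]
where the unit number $C$ depends only on $\alpha,\beta$. (If $g(\alpha)=g(\beta)$ there is no connection at all, since then $g(z(x))$ would be constant, forcing $z(x)$ into the discrete zero set of $f$, so the theorem is vacuous in that case.) Two consequences: each $\gamma_{j}$ is an embedded arc, and $z_{1}(l)=z_{2}(l')$ implies $l=l'$, so the two parametrized curves can only meet at coincident parameter values.

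Next I would extract a Jordan curve. Since $\gamma_{1}\ne\gamma_{2}$ we have $z_{1}(l)\ne z_{2}(l)$ for some $l$, so $\{l\in[0,L]:z_{1}(l)\ne z_{2}(l)\}$ is a nonempty open subset of $[0,L]$; pick a component $(l_{1},l_{2})$ of it. By maximality and continuity $z_{1}(l_{i})=z_{2}(l_{i})=:a_{i}$, with $a_{1}\ne a_{2}$ since $g(a_{1})\ne g(a_{2})$, while $z_{1}\ne z_{2}$ on $(l_{1},l_{2})$; by the previous step the two embedded sub-arcs $z_{1}([l_{1},l_{2}])$ and $z_{2}([l_{1},l_{2}])$ meet precisely in $\{a_{1},a_{2}\}$. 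Their union $\Gamma$ is therefore a Jordan curve; let $D$ be the bounded component of $\mathbb C\setminus\Gamma$, nonempty by the Jordan curve theorem.

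The conclusion then follows from the maximum principle. The function $h(z):=\bigl(g(z)-g(\alpha)\bigr)/C$ is entire and satisfies $h(z_{j}(l))=l\in\mathbb R$, so $\operatorname{Im}h\equiv 0$ on $\partial D=\Gamma$; being harmonic in $D$ and continuous on $\overline D$, $\operatorname{Im}h$ vanishes identically on $\overline D$, hence $h$ is constant on $D$ and $f=g'=C\,h'$ vanishes on the open set $D$. By the identity theorem $f\equiv 0$ on $\mathbb C$, contradicting that $W=|f|^{q}$ is a genuine potential. The step I expect to be the real obstacle is the first one --- upgrading the finite-interval content of Theorem \ref{thmthree} to the assertion that $x\mapsto g(z(x))$ is a strictly increasing affine parametrization of $[g(\alpha),g(\beta)]$ with the endpoint-determined constant $C$ along \emph{every} connection of \eqref{two}: this uses the equipartition identity for connections, the behaviour at $\pm\infty$, and the discreteness of the zero set of $f$ (so that the trajectory never lingers there and $g$-arclength is a legitimate parameter). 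Once that is in hand, the topological and complex-analytic parts are short.
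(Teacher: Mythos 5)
Your proof is correct and follows essentially the same route as the paper: both use the characterization from Theorem \ref{thmthree} to conclude that the normalized antiderivative $\hat g=(g(z)-g(\alpha))/C$ is real on both trajectories, then note that two distinct curves with common endpoints enclose a domain on whose boundary $\operatorname{Im}\hat g$ vanishes, so it vanishes inside, forcing constancy of an analytic function and a contradiction. The only difference is that you spell out details the paper leaves implicit (the equipartition identity for connections on $\mathbb{R}$, the extraction of a Jordan curve, and ending via $f\equiv 0$ rather than via $\hat g$ being constant), which does not change the underlying argument.
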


\begin{proof}
Let $g$ be an antiderivative of $f$ and suppose that $\gamma_{1}$ and
$\gamma_{2}$ are two trajectories to \eqref{two} with the same end points $\alpha$, 
$\beta$. Since the energy $|g(\beta)-g(\alpha)|$ is positive, it follows that
$g(\beta) \neq g(\alpha)$ and we can define the function
\[ \hat{g} = \frac{|g(\beta)-g(\alpha)|}{g(\beta)-g(\alpha)}(g(z)-g(\alpha)), \text{ for all } z \in \mathbb{C}. \]
Then, $\hat{g}$ is real on $\gamma_{1} \cup \gamma_{2}$. If $\gamma_{1} \neq \gamma_{2}$,
then $\gamma_{1}$ and $\gamma_{2}$ will enclose an open domain $D$ in $\mathbb{C}$.
As the imaginary part of $\hat{g}$ on $\partial D = \gamma_{1} \cup \gamma_{2} \cup \{ \alpha, \beta \}$
is zero, it has to be identically zero in $D$. This implies that $\hat{g}$ is a constant
function in $\mathbb{C}$, which is impossible. Thus, $\gamma_{1} = \gamma_{2}$.
\end{proof}

Finally, we present some specific examples of non-existence and non-uniqueness 
of connections between the minima of various potentials. Specifically, it can be proved that for both the potentials 
\[
W(z) = |z^{n} - 1|^{q},
\]
where $n \geq 2$ is an integer, and 
\[
W(z) = |(1-z^{2})(z^{2} + \varepsilon^{2})|^{q},
\]
where $0 < \varepsilon < \infty$, there always exists a unique connection between 
each pair of their minima. We also refer to a non-existence and non-uniqueness phenomenon for the potentials 
\[
W(z) = |(1 - z^{2})(z - i \varepsilon)|^{q},
\]
where $0 \leq \varepsilon < \infty$, and 
\[
W(z) = |(z - 1)(z + a) / z|^{q},
\]
where $0 < a < 1$, respectively. In the first case it can be proved that 
there exists a connection between $-1$ and $1$ if and only if 
$|\varepsilon | > \sqrt{2 \sqrt{3} - 3}$, while in the second case there 
exist exactly two connections between $-a$ and $1$, one in the upper 
half-plane and one in the lower half-plane. We conclude by stating that 
all examples given in \cite{ABC} have exact analogs since the only 
modification needed for the transformation to the $p$-case is the 
change from potentials of the form 
$W = |f|^{2}$ (cf.\ \cite{ABC}) to potentials of the form $W = |f|^{q}$.

\section*{Acknowledgments}
The author would like to thank Professor Nicholas D. Alikakos for his valuable help and guidance.

\end{document}